\documentclass[a4papersize]{article}
\usepackage{amsmath,amsthm, amsxtra,amssymb,latexsym, amscd}
\usepackage{color}
\usepackage[mathscr]{eucal}
\newtheorem{theorem}{Theorem}[section]

\newtheorem{lemma}[theorem]{Lemma}

\newtheorem{example}[theorem]{Example}
\newtheorem{question}[theorem]{Question}

\newtheorem{remark}[theorem]{Remark}

\DeclareMathOperator{\depth}{depth}

\DeclareMathOperator{\Ker}{Ker}

\DeclareMathOperator{\Ann}{Ann}

\DeclareMathOperator{\Soc}{Soc}
\DeclareMathOperator{\im}{Im}

\DeclareMathOperator{\ir}{ir}
\begin{document}
\large

\centerline{\Large {\bf BUCHSBAUM MODULES AND  PARAMETER IDEALS}}
\smallskip
\centerline{\Large {\bf  UNDER FLAT EXTENSIONS}}

\medskip

\vskip 0.7cm
\centerline {NGUYEN THI HONG LOAN}
\centerline{ Vinh University, Nghe An, Vietnam}
\centerline{ E-mail: loannth@vinhuni.edu.vn}
\vskip 0.3cm

\centerline {LE THANH NHAN}
\centerline {Institute of Mathematics}
\centerline{Vietnam Academy of Science and Technology, Hanoi, Vietnam}
\centerline {E-mail: lttnhan@math.ac.vn}

\vskip 0.3cm
\centerline {PHAM HUNG QUY}
\centerline{Department of Mathematics,  FPT University}
\centerline{Hoa Lac Hi-tech Park, Hanoi, Vietnam}
\centerline{E-mail:quyph@fe.edu.vn}

\vskip 0.7cm

\centerline{Dedicated to our advisor, Professor Nguyen Tu Cuong, on the occasion of his 75th birthday.}
\vskip 0.7cm

 \noindent{\bf Abstract}  {\footnote{ {\it{Key words and phrases: }} Flat extension; Buchsbaum module; parameter ideal; local cohomology module. \hfill\break {\it{2020 Subject  Classification: }} 13H10, 13D45, 13B40.\hfill\break	This research is funded by Vietnam 	National Foundation for Science and Technology Development (NAFOSTED).} Let $(R,\frak m)$ and $(S, \frak n)$ be commutative Noetherian local rings, let $\varphi: R\to S$ be a flat local homomorphism.  In this paper, we first characterize the ascent and descent of Buchsbaum modules and generalized Cohen-Macaulay modules via the flat extension  $\varphi$. Then, we provide relationships between the parameter ideals of a finitely generated $R$-module $M$ and those of $M\otimes_RS$ under the assumption that $\frak n=\frak mS.$ 

\section{Introduction}  

Throughout this paper, let $(R,\frak m)$ and $(S, \frak n)$ denote commutative Noetherian local rings. Let  $\varphi: R\rightarrow S$  be a flat local homomorphism and $M$  a finitely generated $R$-module.

Let $\dim_R(M)=d.$ For a system of parameters (s.o.p for short) $x_1, \ldots , x_d$    of $R$,  we set  
$$I(x_1, \ldots , x_d; M)=\ell_R(M/(x_1, \ldots , x_d)M)-e(x_1, \ldots , x_d; M).$$ If $I(x_1, \ldots , x_d; M)$ is a constant not depending on the choice of s.o.p $x_1, \ldots , x_d$, then $M$ is said to be a {\it Buchsbaum module}. The formal definition and systematic theory of Buchsbaum modules were introduced and developed by J. St\"{u}ckrad and W. Vogel in the 1970s, see \cite{SV}.  Set $I(M)=\sup I(x_1, \ldots , x_d; M)$, where the supremum runs over all s.o.p $x_1, \ldots , x_d$ of $M$. Following N. T. Cuong, P. Schenzel and N. V. Trung \cite{CST}, if $I(M)<\infty$ then $M$ is said to be a {\it generalized Cohen-Macaulay module}.  Note that $M$ is generalized Cohen-Macaulay if and only if $H^i_{\frak m}(M)$ is of finite length for all $i<d,$ if and only if $M$ admits a standard s.o.p $x_1, \ldots , x_d$, i.e $I(x_1, \ldots , x_d; M)=I(x_1^2, \ldots , x_d^2; M)$ (see \cite{CST, Sch, Tr}).  Buchsbaum modules and generalized Cohen-Macaulay modules were extensively studied from many different aspects, and they are familiar research objects in commutative algebra and algebraic geometry nowadays.

Some classical results in commutative algebra, initiated by A. Grothendieck \cite[6.3.5]{G}, provided many important properties that can be transferred between $R$ and $S$ in the relation to the closed fiber $S/\frak mS$, a phenomenon known as ascent and descent under the flat extension $\varphi$. We can find the following ascent and descent properties in the references \cite{Av, G, Hart, Mat, WITO}): $S$ is a complete intersection (resp., Gorenstein, Cohen-Macaulay) ring if and only if so are $R$ and $S/\frak mS$. For regular local rings, we have a weaker form of the same result: if $R$ and $S/\frak mS$ are regular then so is $S$; if $S$ is regular then  $R$ is regular, but $S/\frak mS$ need not be regular. Also, the ascent and descent under a flat extension for certain classes of modules, such as Noetherian, Artinian, and injective modules, have been investigated in \cite{ChN, Fox, FWW}.

It is well-known that $M\otimes_RS$ is   Cohen-Macaulay if and only if $M$  and $S/\frak mS$ are Cohen-Macaulay. The first purpose of this paper is to characterize the ascent and descent via the flat extension $\varphi: R\to S$ for the classes of Buchsbaum modules and generalized Cohen-Macaulay modules. The following theorem is the first main result of this paper.

\begin{theorem} \label{T:1a} Suppose that $M$ is not Cohen-Macaulay. Then 
\begin{itemize}
\item[{\rm (a)}] $M\otimes_RS$ is  generalized Cohen-Macaulay if and only if  $M$ is   generalized Cohen-Macaulay and $\dim(S/\frak mS)=0$. In this case we have $I(M\otimes_RS)=\ell_S(S/\frak mS)~I(M).$
\item[{\rm (b)}] $M\otimes_RS$ is  Buchsbaum  if and only if $M$ is  Buchsbaum and $\frak n=\frak mS.$ In this case we have $I(M\otimes_RS)=I(M).$		
	\end{itemize}
\end{theorem}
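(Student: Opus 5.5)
The plan is to compute local cohomology of $M\otimes_RS$ via flat base change. Since $\varphi$ is flat, $H^i_{\frak mS}(M\otimes_RS)\cong H^i_{\frak m}(M)\otimes_RS$. To pass from $\frak mS$ to $\frak n$ we use the standard spectral sequence, or more concretely the known formula, valid for flat local homomorphisms, relating the depth and the dimension: $\dim(M\otimes_RS)=\dim M+\dim(S/\frak mS)$ and $\depth(M\otimes_RS)=\depth M+\depth(S/\frak mS)$. Set $d=\dim M$, $t=\dim(S/\frak mS)$, and $\bar S=S/\frak mS$.

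\textbf{Part (a).} First suppose $t=0$. Then $\frak mS$ is $\frak n$-primary, so $H^i_{\frak n}(-)=H^i_{\frak mS}(-)$. Hence $H^i_{\frak n}(M\otimes_RS)\cong H^i_{\frak m}(M)\otimes_RS$, and for an $R$-module $N$ of finite length we have $\ell_S(N\otimes_RS)=\ell_R(N)\,\ell_S(\bar S)$; this is proved by induction on a composition series, using flatness. Since $S$ is faithfully flat, each $H^i_{\frak m}(M)\otimes_RS$ is of finite length if and only if $H^i_{\frak m}(M)$ is (a finite-length submodule argument: $H^i_{\frak m}(M)$ is Artinian, and if it were not of finite length, its socle series would be strictly increasing, and tensoring preserves this). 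This gives the equivalence when $t=0$. For the formula, I use the known expression $I(M)=\sum_{i<d}\binom{d-1}{i}\ell(H^i_{\frak m}(M))$ for generalized Cohen-Macaulay modules; multiplying each term by $\ell_S(\bar S)$ gives $I(M\otimes_RS)=\ell_S(\bar S)I(M)$.

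Conversely, assume $M\otimes_RS$ is generalized Cohen-Macaulay; it remains to show $t=0$. Since $M$ is not Cohen-Macaulay, choose $i<d$ minimal with $H^i_{\frak m}(M)\ne0$, namely $i=\depth M$. I would use the Grothendieck spectral sequence $H^p_{\frak n}(H^q_{\frak mS}(M\otimes_RS))\Rightarrow H^{p+q}_{\frak n}(M\otimes_RS)$, with $H^q_{\frak mS}(M\otimes S)=H^q_{\frak m}(M)\otimes_R S$. Then $H^{q}_{\frak m}(M)\otimes_R S$ has a nonzero submodule $k\otimes_R S=\bar S$ (the socle part; flatness preserves injections). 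The corner term, with $q=i$ minimal, gives $H^{p}_{\frak n}(H^i_{\frak m}(M)\otimes S)\cong H^{p+i}_{\frak n}(M\otimes S)$ for small $p$, and in particular at $p=t$ something nonvanishing survives. More cleanly: the minimal nonvanishing index pair yields $H^{i+t}_{\frak n}(M\otimes S)\supseteq$ a module surjecting from a module involving $H^t_{\frak n}(\bar S)$, which is not of finite length when $t>0$. Since $i+t<d+t=\dim(M\otimes S)$, this contradicts generalized Cohen-Macaulayness. This is the main obstacle: controlling the spectral sequence to get a non-finite-length piece. An alternative route is to use the criterion that $M$ is generalized Cohen-Macaulay if and only if $M_\frak p$ is Cohen-Macaulay of the right dimension for all nonmaximal $\frak p$ (given equidimensionality and catenarity after completion). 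Take $\frak p\in\operatorname{Ass}$-related to the non-CM locus, and a prime $\frak q$ of $S$ minimal over $\frak pS$ that is not maximal when $t>0$. By flat base change, $(M\otimes S)_\frak q$ is CM only if $M_\frak p$ is, which produces the contradiction. I would try the localization route first, and pick $\frak p=\frak m$ itself: with $t>0$, choose $\frak q\supseteq\frak mS$ minimal over $\frak mS$ with $\frak q\neq\frak n$. Then $(M\otimes S)_\frak q$ is flat over $M$ with zero-dimensional fiber, so it is CM if and only if $M$ is CM, which is false. The dimension check $\dim (M\otimes S)_\frak q+\dim S/\frak q=\dim(M\otimes S)$ is needed to apply the criterion, and this is the delicate point.

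\textbf{Part (b).} If $\frak n=\frak mS$, then $t=0$, $\ell(\bar S)=1$, and $k\otimes_R S=\ell$ is the residue field of $S$. Buchsbaumness is characterized by the canonical maps $\operatorname{Ext}^i_R(k,M)\to H^i_{\frak m}(M)$ being surjective for $i<d$, or equivalently by the truncated dualizing complex being a complex of $k$-vector spaces. Both are preserved and reflected under faithfully flat base change, since $\operatorname{Ext}^i_S(\ell,M\otimes S)=\operatorname{Ext}^i_R(k,M)\otimes_R S$. Alternatively, one can use the statement that $M$ is Buchsbaum if and only if every s.o.p.\ is standard, and note that a s.o.p.\ of $M$ gives one of $M\otimes S$ with the same $I$. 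The equality $I(M\otimes S)=I(M)$ follows from (a). Conversely, if $M\otimes S$ is Buchsbaum, then it is generalized Cohen-Macaulay, so $t=0$ by (a), and $\frak mH^i_{\frak m}(M)\otimes S=\frak n$-annihilated forces $\frak m H^i_{\frak m}(M)=0$ by faithful flatness. To get $\frak n=\frak mS$, take $i=\depth M<d$, with $N=H^i_\frak m(M)\neq0$ of finite length. Then $\frak n(N\otimes S)=0$, but $N\otimes S\supseteq k\otimes S=\bar S$, so $\frak n\bar S=0$, i.e.\ $\frak n=\frak mS$. Finally, I would deduce Buchsbaumness of $M$ from the surjectivity criterion descending by faithful flatness.
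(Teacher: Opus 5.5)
Your argument is correct, provided you commit to the localization route for showing $\dim(S/\mathfrak mS)=0$ in (a); that route differs from the paper's.

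\textbf{Two corrections to (a).}

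First, the spectral-sequence sketch does not close. For $t\ge 2$, the piece $E_\infty^{t,i}\subseteq H^{i+t}_{\mathfrak n}(M\otimes_RS)$ is only a quotient of $E_2^{t,i}=H^t_{\mathfrak n}(H^i_{\mathfrak m}(M)\otimes_RS)$. The kernel comes from differentials out of $E_r^{t-r,i+r-1}$, which you do not control, so the non-finite length of $H^t_{\mathfrak n}(S/\mathfrak mS)$ need not survive. Drop that route.

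Second, the ``delicate'' dimension check in the localization route is not needed. You only use one implication: if $M\otimes_RS$ is generalized Cohen--Macaulay, then $(M\otimes_RS)_{\mathfrak q}$ is Cohen--Macaulay for every $\mathfrak q\in\operatorname{Supp}(M\otimes_RS)\setminus\{\mathfrak n\}$. This direction holds for every finitely generated module over any Noetherian local ring: pass to the completion, a quotient of a regular local ring, and use local duality there and at its localizations. Only the converse needs extra hypotheses. Your argument then reads as follows.
\begin{itemize}
\item If $t>0$, take $\mathfrak q$ minimal over $\mathfrak mS$; then $\mathfrak q\ne\mathfrak n$.
\item $R\to S_{\mathfrak q}$ is faithfully flat local, so $\mathfrak q\in\operatorname{Supp}(M\otimes_RS)$.
\item Its closed fibre is Artinian, so $(M\otimes_RS)_{\mathfrak q}$ is Cohen--Macaulay if and only if $M$ is, which gives a contradiction.
\end{itemize}

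\textbf{Comparison with the paper.} The paper kills a maximal $M$-sequence $x_1,\dots,x_s$, which remains part of an s.o.p.\ of $M\otimes_RS$. It then compares $H^0_{\mathfrak m}(N)\otimes_RS$, of dimension $\dim(S/\mathfrak mS)$, with $H^0_{\mathfrak n}(N\otimes_RS)$, which has finite length because $N\otimes_RS$ is generalized Cohen--Macaulay of positive dimension.

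Your route is more conceptual: it shows that all of $V(\mathfrak mS)$ lies in the non-Cohen--Macaulay locus of $M\otimes_RS$ whenever $M$ is not Cohen--Macaulay. The price is quoting the punctured-spectrum property of generalized Cohen--Macaulay modules and Cohen--Macaulay ascent/descent for flat local maps. The paper's argument stays with dimensions of $H^0$ and finiteness of local cohomology.

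\textbf{Part (b).} This is essentially the paper's proof.
\begin{itemize}
\item Your embedding $S/\mathfrak mS=k\otimes_RS\hookrightarrow H^i_{\mathfrak m}(M)\otimes_RS\cong H^i_{\mathfrak n}(M\otimes_RS)$ is the same idea as the paper's annihilator computation via Lemma~\ref{L:1b}.
\item Your $\operatorname{Ext}$ form of the surjectivity criterion replaces the Koszul form in Lemma~\ref{L:1a}.
\item For descent of Buchsbaumness you first obtain $\mathfrak n=\mathfrak mS$ and then descend surjectivity. The paper instead uses $I(\mathfrak q;M)=I(\mathfrak qS;M\otimes_RS)/\ell_S(S/\mathfrak mS)$. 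Both work.
\end{itemize}
Your aside via ``every s.o.p.\ is standard'' yields only descent. By Example~\ref{E:1}, not every s.o.p.\ of $M\otimes_RS$ comes from $M$.
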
 

   The second purpose of this paper is to study the relationship between parameter ideals of $M$ and parameter ideals of $M\otimes_RS$. We restrict our work to the assumption $\frak n=\frak mS$ in the hope of obtaining an elegant relationship between them. 
   
    Denote by  $X_M$ and $X_{M\otimes_RS}$ respectively the sets of all parameter ideals of $M$ and of $M\otimes_RS$.    
   Since $\frak n=\frak mS,$ it follows that the extension $\frak q S$ is a parameter ideal of $M\otimes_RS$ for every parameter ideal  $\frak q$ of $M$.    
   Therefore, we have a map $\varphi^e: X_M\to X_{M\otimes_RS}$ sending $\frak q$ to $\frak qS$   
    (we call $\varphi^e$ to be the {\it extension map} induced by $\varphi$). By the faithful flatness of $\varphi$, the extension map $\varphi^e$ is always injective, but it need not be a bijection, see Example \ref{E:1}. Note that  $\varphi^e$ is bijective if and only if each parameter ideal of  $M\otimes_RS$ is an extension of a parameter ideal of $M$. 
 
   It is natural to ask whether the contraction $\frak Q\cap R$ is a parameter ideal of $M$, provided that $\frak Q$ is a parameter ideal of  $M\otimes_RS$. In general, the answer is negative. We will construct in Example \ref{E:1} a flat local homomorphism  $(R,\frak m)\to (S, \frak n)$, where $R$ and $S$ are regular, $\frak n=\frak mS$  and there exists a parameter ideal $\frak Q$ of $S$ such that $\frak Q\cap R$ is not a  parameter ideal of $R$. This suggests studying the following set 
      $$X^*_{M\otimes_RS}=\{\frak Q\in X_{M\otimes_RS}\mid \frak Q\cap R\in X_M\}.$$ 
   Then we have a map $\varphi^c: X^*_{M\otimes_RS}\to  X_M$  sending $\frak Q$ to  $\frak Q\cap R$ (we call $\varphi^c$ to be the {\it contraction map} induced by $\varphi$). Since $\frak n=\frak mS$ and $\varphi$ is faithfully flat, it follows that $\frak qS\in X_{M\otimes_RS}$ and $\frak qS\cap R=\frak q$ for every $\frak q\in X_M$. Therefore, the contraction map $\varphi^c$ is always a surjection. However, $\varphi^c$ need not be bijective in general, see Example \ref{E:2}. Note that $\varphi^c$ is bijective if and only if each parameter ideal of  $M\otimes_RS$ belonging to  $X^*_{M\otimes_RS}$ is an extension of a parameter ideal of $M$.

   The following theorem, which is the second main result of this paper, provides criteria for the extension map $\varphi^e$ and the contraction map $\varphi^c$ to be bijections.

\begin{theorem} \label{T:2a} Assume that $\frak n=\frak mS.$ Let $M$ be a finitely generated $R$-module. 
\begin{itemize}
\item[{\rm (a)}] If $\ell_R(S/\frak mS)=1$ then the extension map $\varphi^e: X_M\to X_{M\otimes_RS}$ is a bijection.

\item[{\rm (b)}] If the local ring $R/\Ann_RM$ is Gorenstein or $\ell_R(S/\frak mS)=1$ then the contraction map $\varphi^c: X^*_{M\otimes_RS}\to X_M$ is a bijection.
\end{itemize}
\end{theorem}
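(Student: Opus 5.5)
The plan is to reduce everything to the faithfully flat extension $\bar R=R/\frak a\to \bar S=S/\frak aS$, where $\frak a=\Ann_RM$. By flatness, $\Ann_S(M\otimes_RS)=\frak aS$. The maps $\bar R\to\bar S$ are still flat local with $\bar{\frak n}=\bar{\frak m}\bar S$, and the closed fibre $S/\frak mS$ is unchanged. A parameter ideal of $M$ (resp. of $M\otimes_RS$) is then, modulo the annihilator, just a $\bar{\frak m}$-primary (resp. $\bar{\frak n}$-primary) ideal generated by $d=\dim M$ elements. I will first prove both statements at this "primary" level and then handle the passage back to $R$ and $S$.

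For (a), the key fact is that $\ell_R(S/\frak mS)=1$ together with $\frak n=\frak mS$ forces $S/\frak mS\cong R/\frak m$. By flatness, $\frak m^i/\frak m^{i+1}\otimes_RS\cong \frak m^i/\frak m^{i+1}\otimes_R S/\frak mS\cong\frak m^i/\frak m^{i+1}$. An induction on $N$ using the five lemma then gives isomorphisms $R/\frak m^N\cong S/\frak n^N$ for all $N$, and likewise for $\bar R\to\bar S$.

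Now let $\frak Q\in X_{M\otimes_RS}$ and $\bar{\frak Q}=\frak Q\bar S$. This ideal is $\bar{\frak n}$-primary, so it contains $\bar{\frak n}^t$ for some $t$. Since $\bar{\frak Q}$ is the preimage of its image in $\bar S/\bar{\frak n}^t\cong\bar R/\bar{\frak m}^t$, we get $\bar{\frak Q}=J\bar S$ with $J=\bar{\frak Q}\cap\bar R$. Faithful flatness and $\bar{\frak n}=\bar{\frak m}\bar S$ give $J/\bar{\frak m}J\otimes_{\bar R}\bar S\cong J\bar S/\bar{\frak n}J\bar S$. Both sides are vector spaces over the same residue field, so $\mu(J)=\mu(\bar{\frak Q})\le d$. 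Hence $J$ is generated by $d$ elements and is $\bar{\frak m}$-primary, i.e. it comes from a parameter ideal $\frak q$ of $M$ with $\frak qS+\frak aS=\frak Q+\frak aS$.

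The main obstacle is upgrading this equality modulo $\frak aS$ to $\frak Q=\frak qS$ on the nose. I would attempt this by choosing the generators $x_i\in R$ as lifts of the generators $y_i$ of $\frak Q$, using $R/\frak m^N\cong S/\frak n^N$ for $N$ large. I would then compare $\frak Q$ and $(x_1,\dots,x_d)S$ through Nakayama's lemma applied to $\frak Q/\frak n\frak Q$. If the paper regards parameter ideals only up to the annihilator, this step is vacuous.

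For (b), first note that $\frak qS\cap R=\frak q$ for every parameter ideal $\frak q$, by faithful flatness. Hence injectivity of $\varphi^c$ amounts to showing that every $\frak Q\in X^*_{M\otimes_RS}$ equals $\frak qS$ with $\frak q=\frak Q\cap R$. When $\ell_R(S/\frak mS)=1$ this follows from (a): writing $\frak Q=\frak q'S$, we get $\frak q'=\frak Q\cap R=\frak q$.

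In the Gorenstein case, $\frak qS\subseteq\frak Q$ always holds, and I argue modulo $\frak a$. Since $\bar R$ is Gorenstein and $\bar{\frak q}$ is generated by a s.o.p., $A=\bar R/\bar{\frak q}$ is Artinian Gorenstein with one-dimensional socle $ku$. Because $\frak n=\frak mS$ and $S$ is flat over $R$, the socle of $B=\bar S/\bar{\frak q}\bar S=A\otimes_RS$ is
$$(0:_A\frak m)\otimes_RS\cong k\otimes_RS=S/\frak mS,$$
which is the residue field of $S$. So $\Soc(B)=uB$ is simple. If $\bar{\frak Q}/\bar{\frak q}\bar S\neq0$, this nonzero ideal of the Artinian ring $B$ meets $\Soc(B)$ nontrivially and therefore contains $u$. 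That means a lift of $u$ lies in $(\frak Q+\frak aS)\cap R$ but not in $\frak q+\frak a$. This contradicts $\frak Q\cap R=\frak q$, once one checks that $\frak a\subseteq\frak q$-compatibility holds, i.e. $(\frak Q+\frak aS)\cap R=\frak q+\frak a$. Hence $\frak Q+\frak aS=\frak qS+\frak aS$.

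The remaining lift from this equality to $\frak Q=\frak qS$ is the same obstacle as in (a), and I would treat it in the same way.
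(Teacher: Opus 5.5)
Apart from the annihilator issue, your argument is the paper's.

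\begin{itemize}
\item The paper also passes to $\Ann_RM=0$ via Lemma~\ref{L:1b}.
\item In (a) it gets $R/\mathfrak m^{k+1}\cong S/\mathfrak m^{k+1}S$ from injectivity (faithful flatness) plus a length count, rather than the five lemma. It then lifts generators $y_i$ of the now $\mathfrak n$-primary $\mathfrak Q$ with $\varphi(x_i)-y_i\in\mathfrak m^{k+1}S\subseteq\mathfrak m\mathfrak Q$ and applies Nakayama. Your preimage argument plus $\mu(J)=\mu(\bar{\mathfrak Q})$ is an equivalent variant.
\item In the Gorenstein case it uses exactly your simple-socle argument, phrased as $\ell_S\big((\mathfrak qS:_S\mathfrak mS)/\mathfrak qS\big)=1$ forcing $(\mathfrak q:_R\mathfrak m)S\subseteq\mathfrak Q$.
\item The case $\ell_R(S/\mathfrak mS)=1$ of (b) is deduced from (a), as you do.
\end{itemize}

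The real gap is the step you defer, and it cannot be filled. If $\mathfrak a\neq 0$, then $\mathfrak Q$ need not be $\mathfrak n$-primary. You only know $\mathfrak n^N\subseteq\mathfrak Q+\mathfrak aS$. Approximating the $y_i$ modulo $\mathfrak n^N$ therefore gives only $\mathfrak Q\subseteq(x_1,\dots,x_d)S+\mathfrak n\mathfrak Q+\mathfrak aS$, and Nakayama does not apply.

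Worse, suppose a parameter ideal of $M$ means, as usual, an ideal of $R$ generated by a system of parameters of $M$. Then the equality $\mathfrak Q=\mathfrak qS$ is false in general. Here is a counterexample.
\begin{itemize}
\item Take $R=\mathbb Q[x,y]_{(x,y)}$ and $S=\widehat R=\mathbb Q[[x,y]]$, so $\mathfrak n=\mathfrak mS$ and $\ell_R(S/\mathfrak mS)=1$.
\item Take $M=R/xR$, so $R/\Ann_RM$ is a DVR, hence Gorenstein.
\item Let $f=y-x\sqrt{1+x}$ and $h=y^2-x^2-x^3=f\,(y+x\sqrt{1+x})$.
\item Since $(x,f)S=\mathfrak n$, the ideal $\mathfrak Q=fS$ is a parameter ideal of $S/xS$.
\item As $f\notin\mathfrak n^2$, $fS$ is prime. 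So $fS\cap R$ is a prime of the two-dimensional regular local ring $R$ that contains the prime element $h$. It is not $\mathfrak m$, since otherwise $\mathfrak n=\mathfrak mS\subseteq fS$. Hence $\mathfrak Q\cap R=hR$.
\item $hR$ is a parameter ideal of $M$, because $(x,h)R=(x,y^2)R$.
\item But $\mathfrak Q\neq hS$, since $y+x\sqrt{1+x}$ is not a unit.
\end{itemize}
Thus $\mathfrak Q\in X^*_{M\otimes_RS}$ is not extended from any parameter ideal of $M$, and $\mathfrak Q$ and $hS$ have the same contraction. Both (a) and (b) fail for this non-faithful $M$.

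In the same example $(\mathfrak Q+xS)\cap R=\mathfrak m\neq(x,y^2)R=\mathfrak q+\mathfrak a$. So the compatibility you needed in the Gorenstein case fails as well.

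The paper's ``without loss of any generality, $\Ann_RM=0$'' is exactly where this is passed over. The theorem is correct for faithful $M$, or with parameter ideals read modulo $\Ann_RM$. State that restriction explicitly and drop the lifting attempt. With it, your proof is complete and essentially coincides with the paper's.
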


In the next section, we prove the main results. We also provide remarks and examples to show that Theorem \ref{T:2a}(a) is optimal, whereas Theorem \ref{T:2a}(b) is not yet sharp.
 
\section{Main results}

  In order to prove Theorem \ref{T:1a}, we need some  preliminaries. 
  
  We recall some characterizations of Buchsbaum modules, see \cite[Lemma 1]{Vog}, \cite[Theorem 2.15, Theorem 20 (App.)]{SV}.  Let $x_1,\ldots, x_t$ be a minimal system of generators of the maximal ideal $\frak m$, let $K_{\bullet} (x_1,\ldots ,x_t; M)$ be the Koszul complex. Note that this complex is, up to isomorphism, independent of choice of the minimal system of generators of $\frak m$, so it is denoted by $K_{\bullet} (\frak m; M)$. For each $i$, denote by $H_i(\frak m; M)$ the $i$-th homology module of the Koszul complex $K_{\bullet} (\frak m; M)$ and  $H^i(\frak m; M)=H_{t-i}(\frak m; M).$  

\begin{lemma} \label{L:1a} Let $\dim_R(M)=d$. The following statements are equivalent:
\begin{itemize} 
\item[{\rm (a)}] $M$ is  Buchsbaum.
 \item[{\rm (b)}] The canonical maps $\rho^i_M: H^i(\frak m;M)\to H^i_{\frak m}(M)$ are surjective for all $i\neq d$.
\item[{\rm (c)}] There exists a minimal system of generators of $\frak m$ such that each its subsystem of $d$ elements is a standard s.o.p of $M$.
\end{itemize}
\end{lemma}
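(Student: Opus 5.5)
This lemma collects classical characterizations of Buchsbaum modules. The plan is to prove the cycle (a)$\Rightarrow$(c)$\Rightarrow$(a) and, separately, (a)$\Leftrightarrow$(b), following St\"uckrad--Vogel.

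\textbf{(a)$\Rightarrow$(c).} Buchsbaum modules have finite-length local cohomology below $d$, so they are generalized Cohen--Macaulay, and for them every s.o.p.\ is standard. I will first recall the characterization of standard s.o.p.'s by $\frak q H^i_{\frak m}(M/(x_1,\dots,x_j)M)=0$ for all $i+j<d$ (Trung). In a Buchsbaum module every s.o.p.\ is a weak sequence, i.e.\ $(x_1,\dots,x_{j}):x_{j+1}=(x_1,\dots,x_j):\frak m$ on $M$. Comparing $I(x_1,\dots,x_d;M)$ with $I(x_1^2,\dots,x_d^2;M)$ then shows that every s.o.p.\ is standard. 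In particular any minimal generating system of $\frak m$ works, once we make it sufficiently general by prime avoidance so that each $d$-subset is an s.o.p. In the case $R/\frak m$ is finite, I would pass to $R[X]_{\frak m R[X]}$, which is a flat extension with $\frak n=\frak mS$, and come back down; only existence is needed.

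\textbf{(c)$\Rightarrow$(a).} Fix $x_1,\dots,x_t$ as in (c). Standardness gives $I(\underline{x};M)=\sum_{i<d}\binom{d-1}{i}\ell(H^i_{\frak m}(M))$ for every $d$-subset $\underline{x}$. It also gives $\frak m H^i_{\frak m}(M/(x_{j_1},\dots,x_{j_k})M)=0$ whenever $i+k<d$, because these ideals, summed over all subsets, generate $\frak m$. This is the condition $\frak m H^i_{\frak m}(M)=0$ together with the analogous condition for all quotients by general parameter elements. By induction on $d$, using the long exact sequences for $0\to 0:_M x\to M\xrightarrow{x} M\to M/xM\to0$, it yields that every s.o.p.\ satisfies $I=\sum\binom{d-1}{i}\ell(H^i_{\frak m}(M))$. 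Hence $I$ is constant and $M$ is Buchsbaum.

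\textbf{(a)$\Leftrightarrow$(b).} I will use the natural map $H^i(\frak m;M)=H^i(\operatorname{Hom}(K_\bullet,M))\to \varinjlim_n H^i(x_1^n,\dots,x_t^n;M)=H^i_{\frak m}(M)$. If $M$ is Buchsbaum, then St\"uckrad--Vogel's surjectivity criterion, proved via the structure of the truncated complex of $M$ being quasi-isomorphic to a complex of $k$-vector spaces, gives (b). Conversely, surjectivity of $\rho^i$ for $i<d$ implies $\frak m H^i_{\frak m}(M)=0$, since $\frak m$ kills Koszul cohomology. It also gives finiteness of length, and surjectivity passes to $M/xM$ for general $x$ via the Koszul long exact sequences. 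Induction then yields (c).

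The main obstacle is (a)$\Rightarrow$(b), the surjectivity criterion, which genuinely needs the derived-category/standard-system machinery of \cite[Theorem 2.15]{SV}. Since the lemma is a recollection, I would simply cite \cite[Lemma 1]{Vog} and \cite{SV} for this step rather than reprove it.
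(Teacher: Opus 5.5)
\textbf{Gap in (c)$\Rightarrow$(a).} The paper does not prove this lemma; it quotes it from \cite[Lemma 1]{Vog} and \cite[Theorem 2.15, Theorem 20 (App.)]{SV}. Citing St\"uckrad--Vogel for (a)$\Rightarrow$(b) is therefore in line with the paper. However, the implications you claim to prove yourself have a genuine gap, and the real difficulty is not where you put it.

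Your first step in (c)$\Rightarrow$(a) is correct. For a subset $J$ of the fixed basis with $|J|+i<d$, Trung's criterion applied to each $d$-subset $D\supseteq J$ (ordered with $J$ first) gives $(x_D)H^i_{\mathfrak m}(M/(x_J)M)=0$, and these ideals sum to $\mathfrak m$. But this controls only quotients by elements of that one basis, not ``general parameter elements'' as you then say.

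To get $I(z_1,\dots,z_d;M)=\sum_{i<d}\binom{d-1}{i}\ell(H^i_{\mathfrak m}(M))$ for an arbitrary s.o.p.\ you need $(z)H^i_{\mathfrak m}(M/(z_1,\dots,z_j)M)=0$. The hypothesis says nothing about $M/z_1M$, so your induction on $d$ has nothing to apply to. The long exact sequences do not supply this either. Since $0:_Mz$ has finite length and $zH^j_{\mathfrak m}(M)=0$ for $j<d$, they give
\[
0\to H^i_{\mathfrak m}(M)\to H^i_{\mathfrak m}(M/zM)\to H^{i+1}_{\mathfrak m}(M)\to 0\qquad(1\le i\le d-2),
\]
and hence only $\mathfrak m^2H^i_{\mathfrak m}(M/zM)=0$. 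Making $\mathfrak m$ kill the middle term is exactly where Buchsbaum differs from quasi-Buchsbaum ($\mathfrak mH^i_{\mathfrak m}(M)=0$ for $i<d$), and quasi-Buchsbaum modules need not be Buchsbaum. So annihilation conditions plus long exact sequences cannot close the argument.

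Passing from one good basis of $\mathfrak m$ to all systems of parameters is the real content of (c)$\Rightarrow$(a). It needs the Koszul-complex machinery of the cited results, just as (a)$\Rightarrow$(b) does. Your (b)$\Rightarrow$(c) has the same hole: ``surjectivity passes to $M/xM$ via the Koszul long exact sequences'' is the same splitting problem, asserted without argument. You cannot obtain it from (a) without circularity.

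\textbf{Two smaller points.} In (a)$\Rightarrow$(c) no weak sequences are needed: if $I(-;M)$ is constant, then trivially $I(\underline x;M)=I(x_1^2,\dots,x_d^2;M)$, so every s.o.p.\ is standard.

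The detour through $R[X]_{\mathfrak mR[X]}$ for a finite residue field does not produce generators of $\mathfrak m$ inside $R$, which is what (c) requires, and it is also unnecessary. Lift a basis of $\mathfrak m/\mathfrak m^2$ to $l_1,\dots,l_t$. Then choose inductively $x_i\in l_i+\mathfrak m^2$ outside the finitely many primes $\mathfrak p\in\operatorname{Supp}M/(x_J)M$ with $\dim R/\mathfrak p=d-|J|$, for all $J\subseteq\{1,\dots,i-1\}$ with $|J|<d$. These primes are not $\mathfrak m$, so $\mathfrak m^2\not\subseteq\mathfrak p$, and Davis' coset form of prime avoidance applies over any residue field. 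This gives a minimal basis of $\mathfrak m$ in which every $d$-subset is an s.o.p.\ of $M$.
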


Note that the annihilators of finitely generated modules are compatible under
flat extensions. Although this elementary property is already known, we provide a proof for the reader's convenience.

\begin{lemma} \label{L:1b} $\Ann_S(M\otimes_RS)=(\Ann_RM)S.$
\end{lemma}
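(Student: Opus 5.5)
Since $M$ is finitely generated, I will choose generators $m_1,\ldots,m_k$ of $M$. The plan is to express $\Ann_RM$ as the kernel of an $R$-linear map and then use flatness of $S$ to carry this description over to $S$.

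Consider the $R$-linear map $f: R\to M^{k}$, $r\mapsto (rm_1,\ldots,rm_k)$. Its kernel is $\bigcap_j (0:_R m_j)=\Ann_RM$, so there is an exact sequence
$$0\to \Ann_RM\to R\xrightarrow{f} M^{k}.$$
Tensoring with the flat $R$-module $S$ preserves exactness and gives
$$0\to (\Ann_RM)\otimes_RS\to S\xrightarrow{f\otimes S} (M\otimes_RS)^{k}.$$
Since $S$ is flat, the image of $(\Ann_RM)\otimes_RS$ in $S$ is $(\Ann_RM)S$. The map $f\otimes S$ sends $s$ to $(s(m_1\otimes 1),\ldots,s(m_k\otimes 1))$.

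The elements $m_j\otimes 1$ generate $M\otimes_RS$ as an $S$-module. Therefore the kernel of $f\otimes S$ is exactly $\Ann_S(M\otimes_RS)$, and comparing the two kernels gives $\Ann_S(M\otimes_RS)=(\Ann_RM)S$.

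The only point needing care is identifying $\Ker(f\otimes S)$ with the annihilator, which reduces to the fact that the $m_j\otimes 1$ generate $M\otimes_RS$. Flatness enters only in keeping the sequence exact; finite generation of $M$ is what lets us write the annihilator as the kernel of a map into a finite direct sum $M^k$. With an infinite product this step could fail, because tensor products do not commute with infinite products.
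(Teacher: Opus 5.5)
Your proof is correct and follows the paper's argument: you write $\Ann_RM$ as the kernel of $R\to M^k$, $r\mapsto(rm_1,\ldots,rm_k)$, tensor with the flat module $S$, and identify the kernel of the induced map $S\to(M\otimes_RS)^k$ with $\Ann_S(M\otimes_RS)$. One small nuance is that the image of $(\Ann_RM)\otimes_RS$ in $S$ is $(\Ann_RM)S$ even without flatness; flatness is what makes the tensored sequence exact at $S$, so that this image equals the kernel.
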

\begin{proof}
Let $m_1, \ldots , m_t$ be a system of generators of $M$. Then we have the exact sequence of $R$-modules
$$0\to \Ann_RM\to R\overset{\pi}{\to} M^t,$$ where  $\pi: R\to M^t$ sends $r$ to $(rm_1, \ldots , rm_t).$ This exact sequence induces the exact sequence
$$0\to (\Ann_RM)\otimes_RS\to S\overset{\pi^*}{\to} (M\otimes_RS)^t.$$
Hence $(\Ann_RM)S=(\Ann_RM)\otimes_RS=\Ker (\pi^*)=\Ann_S(M\otimes_RS).$
\end{proof}

\begin{theorem} \label{T:1} Suppose that $M$ is not Cohen-Macaulay. Then 
\begin{itemize}
\item[{\rm (a)}] $M\otimes_RS$ is  generalized Cohen-Macaulay if and only if  $M$ is   generalized Cohen-Macaulay and $\dim(S/\frak mS)=0$. In this case we have $I(M\otimes_RS)=\ell_S(S/\frak mS)~I(M).$
\item[{\rm (b)}] $M\otimes_RS$ is  Buchsbaum  if and only if $M$ is  Buchsbaum and $\frak n=\frak mS.$ In this case we have $I(M\otimes_RS)=I(M).$
\end{itemize}
\end{theorem}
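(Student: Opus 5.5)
The plan is to work throughout with local cohomology and the flat base change isomorphism $H^i_{\frak m}(M)\otimes_RS\cong H^i_{\frak mS}(M\otimes_RS)$. Set $N=M\otimes_RS$, $F=S/\frak mS$, $d=\dim_RM$. The standard formula gives $\dim_S N=d+\dim F$, and $\depth_S N=\depth_R M+\depth F$.

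For (a), assume first that $N$ is generalized Cohen-Macaulay. Put $s=\dim F$ and choose $y_1,\dots,y_s\in S$ whose images form an s.o.p. of $F$. Since $F$ has finite length over $S/(y)$, we get $H^j_{\frak n}(-)\cong H^j_{\frak mS}(-)$ on $S/(y)$-modules, and a Grothendieck/Mayer--Vietoris type spectral sequence relates $H^\bullet_{\frak n}(N)$ to the local cohomology $H^\bullet_{\frak mS}(N)=H^\bullet_{\frak m}(M)\otimes_RS$. Since $M$ is not Cohen-Macaulay, there is some $i<d$ with $H^i_{\frak m}(M)\ne0$. I would show that if $s>0$, then $H^{i+s}_{\frak n}(N)$ (or an appropriate lower one) is not of finite length, contradicting that $N$ is generalized Cohen-Macaulay.

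Rather than run the spectral sequence, the cleaner route is through localization. Pick $\frak P\in\operatorname{Spec}S$ minimal over $\frak mS$ with $\dim S/\frak P=s$, so $\frak P\ne\frak n$ when $s>0$. The map $R\to S_{\frak P}$ is flat local with zero-dimensional closed fiber, and
$$H^i_{\frak PS_{\frak P}}(N_{\frak P})\cong H^i_{\frak m}(M)\otimes_RS_{\frak P}\ne 0 \quad\text{(faithful flatness).}$$
Hence $N_{\frak P}$ is not Cohen-Macaulay. However, a generalized Cohen-Macaulay module has Cohen-Macaulay localizations at every non-maximal prime $\frak P$ with $\dim N_{\frak P}+\dim S/\frak P=\dim N$ (Cuong--Schenzel--Trung). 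Since $\dim N_{\frak P}=d$ and $\dim S/\frak P=s$, $N$ is equidimensional at $\frak P$ in this sense, giving the contradiction. So $s=0$. Once $F$ is Artinian, $\frak mS$ is $\frak n$-primary, so $H^i_{\frak n}(N)\cong H^i_{\frak m}(M)\otimes_RS$. A finite-length $R$-module $L$ satisfies $\ell_S(L\otimes_RS)=\ell_R(L)\,\ell_S(F)$, by induction on length using flatness. This gives both directions of the finiteness equivalence.

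For the formula $I(N)=\ell_S(F)I(M)$, use
$$I(M)=\sum_{i<d}\binom{d-1}{i}\ell_R(H^i_{\frak m}(M)),$$
which holds for generalized Cohen-Macaulay modules, and the same formula for $N$ with the same $d$.

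For (b), first suppose $N$ is Buchsbaum. Then it is generalized Cohen-Macaulay, so by (a) $M$ is generalized Cohen-Macaulay and $F$ is Artinian. Buchsbaum modules have local cohomology $H^i_{\frak n}(N)$, $i<d$, annihilated by $\frak n$. Since $H^i_{\frak m}(M)\otimes_RS\ne0$ for some $i<d$, this module is killed by $\frak n$. Tensoring with $S$ is faithful, so $\frak m H^i_{\frak m}(M)=0$, and $H^i_{\frak m}(M)\otimes_R S$ is a nonzero direct sum of copies of $F$. Therefore $\frak n F=0$, i.e. $\frak n=\frak mS$.

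The descent of the Buchsbaum property for $M$, and the converse direction, should both come from Lemma~\ref{L:1a}(b). Assuming $\frak n=\frak mS$, a minimal generating system of $\frak m$ maps to a minimal generating system of $\frak n$, because $\frak m/\frak m^2\otimes_R S\cong\frak n/\frak n^2$ and the residue field of $S$ is $F=S/\frak n$. Hence $K_\bullet(\frak n;N)=K_\bullet(\frak m;M)\otimes_RS$ and $H^i(\frak n;N)\cong H^i(\frak m;M)\otimes_RS$. Under these identifications $\rho^i_N=\rho^i_M\otimes S$, and faithful flatness shows $\rho^i_N$ is surjective iff $\rho^i_M$ is. This gives both directions, and $I(N)=\ell_S(F)I(M)=I(M)$.

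The main obstacle is the step in (a) forcing $\dim F=0$. That argument relies on the Cuong--Schenzel--Trung criterion that a generalized Cohen-Macaulay module is Cohen-Macaulay at non-maximal primes satisfying the dimension equality. Its hypotheses require checking that $\dim N_{\frak P}+\dim S/\frak P=\dim N$, which I verified above using $\dim S/\frak P=s$. If this citation is unavailable, I would instead argue via $\depth N_{\frak P}\le i<d=\dim N_{\frak P}$ directly.
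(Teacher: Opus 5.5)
Your proof is correct, but it reaches the key step of (a), namely $\dim(S/\mathfrak mS)=0$, by a different route from the paper.

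\textbf{The step $\dim(S/\mathfrak mS)=0$.} The paper first cuts $M$ down by a maximal $M$-sequence. This sequence is still part of an s.o.p.\ of $M\otimes_RS$, so the quotient stays generalized Cohen--Macaulay, and one is reduced to depth $0$. The paper then notes that $H^0_{\mathfrak m}$ of the quotient extends to a submodule of dimension $\dim(S/\mathfrak mS)$. That submodule must lie in the finite-length module $H^0_{\mathfrak n}$, because lower-dimensional submodules of a generalized Cohen--Macaulay module of positive dimension have finite length.

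You instead localize at a minimal prime $\mathfrak P$ of $\mathfrak mS$ with $\dim S/\mathfrak P=\dim(S/\mathfrak mS)$. Then $R\to S_{\mathfrak P}$ is flat local with Artinian fibre, and $(M\otimes_RS)_{\mathfrak P}$ has dimension $d$ and a nonzero $H^i$ with $i<d$. So it is not Cohen--Macaulay, contradicting the fact that a generalized Cohen--Macaulay module is Cohen--Macaulay on its punctured support.

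Both arguments rest on the same structural property of generalized Cohen--Macaulay modules. Yours needs no reduction to depth zero and no fact about quotients by parameter elements. It also shows why the result holds: every prime of $V(\mathfrak mS)$ is a non-Cohen--Macaulay point of $M\otimes_RS$.

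Two small remarks on the fact you cite. It holds at every $\mathfrak P\in\operatorname{Supp}_S(M\otimes_RS)\setminus\{\mathfrak n\}$ with no hypothesis on $S$ (pass to $\widehat S$ and use local duality), and the dimension equality is part of its conclusion, not a hypothesis. Your suggested fallback, via $\depth (M\otimes_RS)_{\mathfrak P}<\dim (M\otimes_RS)_{\mathfrak P}$, only restates non-Cohen--Macaulayness, so it would not replace that citation.

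\textbf{Part (b).} Here you reverse the paper's order. You obtain $\mathfrak n=\mathfrak mS$ first: from $\mathfrak nH^i_{\mathfrak n}(M\otimes_RS)=0$ and faithful flatness you get $\mathfrak mH^i_{\mathfrak m}(M)=0$. Hence $H^i_{\mathfrak m}(M)\otimes_RS$ is a nonzero direct sum of copies of $S/\mathfrak mS$, which forces $\mathfrak n(S/\mathfrak mS)=0$. You then get ascent and descent of the Buchsbaum property at once from Lemma~\ref{L:1a}(b), via $\rho^i_{M\otimes_RS}\cong\rho^i_M\otimes_RS$.

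The paper proves descent first, directly from the definition, by comparing $I(\mathfrak q;M)$ with $I(\mathfrak qS;M\otimes_RS)$; this only needs $\dim(S/\mathfrak mS)=0$. It then derives $\mathfrak n=\mathfrak mS$ from annihilators via Lemma~\ref{L:1b}, using that both modules are Buchsbaum. Your version is more uniform; the paper's is more elementary.
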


 \begin{proof}  Let $\dim_R(M)=d$, $\depth_R(M)=s$ and $\dim(S/\frak mS)=r$. Then $s<d$ since $M$ is not Cohen-Macaulay.  Let $x_1, \ldots , x_s\in\frak m$ be an $M$-sequence. Set $N:=M/(x_1, \ldots , x_s)M.$ Then $\depth_R(N)=0$ and $\dim_R(N)=d-s>0$. Hence $H^0_{\frak m}(N)\neq 0$. Therefore,
$$\dim_S\big(H^0_{\frak m}(N)\otimes_RS\big)=\dim_R(H^0_{\frak m}(N))+\dim (S/\frak mS)=r<(d-s)+r.$$

(a) Assume that  $M\otimes_RS$ is generalized Cohen-Macaulay. As  $\dim_S(M\otimes_RS)=d+r$ and
$$\dim_S\big((M\otimes_RS)/(x_1, \ldots , x_s)(M\otimes_RS)\big)=\dim_S(N\otimes_RS)=d-s+r,$$ it follows that $x_1, \ldots , x_s$ is a part of a s.o.p of $M\otimes_R S$. Hence $N\otimes_RS$ is generalized Cohen-Macaulay of positive dimension $d+r-s$. Therefore,  $H^0_{\frak n}(N\otimes_RS)$ is the largest submodule of $N\otimes_RS$ of dimension less than $d+r-s.$ So, $H^0_{\frak m}(N)\otimes_RS \subseteq H^0_{\frak n}(N\otimes_RS).$  Therefore,  
$$r=\dim_S\big(H^0_{\frak m}(N)\otimes_RS\big)\leq \dim_S(H^0_{\frak n}(N\otimes_RS))=0.$$  Hence $\dim (M\otimes_RS)=d$ and $H^i_{\frak m}(M)\otimes_RS\cong H^i_{\frak n}(M\otimes_RS)$ by flat base change \cite[4.3.2]{BS}  for every integer $i\geq 0$. So we have $$\ell_R(H^i_{\frak m}(M))=\frac{\ell_S(H^i_{\frak n}(M\otimes_RS))}{\ell_S(S/\frak mS)}<\infty$$ for all $i<d.$ Therefore,  $M$ is  generalized Cohen-Macaulay.

Conversely, assume that $M$ is  generalized Cohen-Macaulay and $\dim (S/\frak mS)=0.$ Then $\dim_S(M\otimes_RS)=d$ and $H^i_{\frak m}(M)\otimes_RS\cong H^i_{\frak n}(M\otimes_RS)$ for all $i$. Hence $$\ell_S(H^i_{\frak n}(M\otimes_RS))=\ell_R(H^i_{\frak m}(M))~\ell_S(S/\frak mS)<\infty$$ for all $i<d.$ Therefore, $M\otimes_RS$ is generalized Cohen-Macaulay. 

Finally, assume that $M\otimes_RS$ is generalized Cohen-Macaulay. Then $I(M\otimes_RS)<\infty$ and we have by \cite[Satz 3.7]{CST} that 
\begin{align} I(M\otimes_RS)&=\sum_{i=0}^{d-1}\binom{d-1}{i}\ell_S(H^i_{\frak n}(M\otimes_RS))\notag\\
&=\sum_{i=0}^{d-1}\binom{d-1}{i}\ell_R(H^i_{\frak m}(M))~\ell_S(S/\frak mS)=\ell_S(S/\frak mS)~I(M).\notag\end{align}

 (b) Assume that $M\otimes_RS$ is  Buchsbaum. Then $r=0$ by the assertion (a). Therefore, $\dim_S(M\otimes_RS)=d$. Let $\frak q$ be a  parameter ideal of $M$. Set $\frak Q=\frak qS$. Since $r=0$, it follows that  $\frak Q$ is a parameter ideal of $M\otimes_RS$ and  
$$\ell_R(M/\frak qM)-e(\frak q; M)=\frac {\ell_S\big((M\otimes_RS)/\frak Q(M\otimes_RS)\big)-e(\frak Q; M\otimes_RS)}{\ell_S(S/\frak mS)}.$$
Therefore, $\ell_R(M/\frak qM)-e(\frak q; M)$ is a constant not depending on $\frak q$. Thus, $M$ is Buchsbaum. Now we prove that $\frak n=\frak mS.$ Since $M$ is not Cohen-Macaulay, there exists  $i<d$ such that $H^i_{\frak m}(M)\neq 0.$ Because $r=0$,  we have $$H^i_{\frak n}(M\otimes_RS)\cong H^i_{\frak m}(M)\otimes_RS\neq 0.$$ So,  $\Ann_RH^i_{\frak m}(M)\subseteq \frak m$ and $\Ann_SH^i_{\frak n}(M\otimes_RS)\subseteq \frak n$. Because $M$ is  a Buchsbaum  $R$-module and  $M\otimes_RS$ is a Buchsbaum $S$-module, we have $\frak mH^i_{\frak m}(M)=0$ and $\frak n H^i_{\frak n}(M\otimes_RS)=0,$ see \cite{SV}. Hence
 $\Ann_RH^i_{\frak m}(M)=\frak m$ and $\Ann_SH^i_{\frak n}(M\otimes_RS)=\frak n.$  Since $H^i_{\frak m}(M)$ is a finitely generated $R$-module,  we have by Lemma \ref{L:1b} that $\Ann_S(H^i_{\frak m}(M)\otimes_RS)=\frak mS$. Therefore,  $\frak n= \frak mS.$

Conversely, assume that $M$ is Buchsbaum and $\frak n=\frak mS.$ Then we get by Lemma \ref{L:1a} that  the canonical maps $\rho^i_M: H^i(\frak m;M)\to H^i_{\frak m}(M)$ are surjective for all $i<d$. Since $R\to S$ is faithfully flat and $\frak n=\frak mS$, it follows that $\rho^i_{M\otimes_RS}: H^i(\frak n;M\otimes_RS)\to H^i_{\frak n}(M\otimes_RS)$ are surjective for all $i<d$. Note that $\dim_S(M\otimes_RS)=d$.  Therefore, $M\otimes_RS$  is a Buchsbaum $S$-module by Lemma \ref{L:1a}. As $\frak n=\frak mS$, we have  $I(M\otimes_RS)=I(M)$ by the assertion (a).
\end{proof}

In the following part, we study the relationship between parameter ideals of $M$ and parameter ideals of $M\otimes_RS$ under the assumption  that $\frak n=\frak mS$. Note that the natural maps from $R$ to its $\frak m$-adic completion $\widehat R$ and to its Henselization $R^h$ satisfy this assumption. If $K\subseteq K'$ is a field extension and $R, S$ are  the formal power series rings in the same variables over $K, K'$ respectively, then the canonical injection $R\to S$  satisfies this assumption.

We first give an example to show that the contraction of a  parameter ideal of $M\otimes_RS$ is not necessarily a parameter ideal of $M$, even in the case where  $R$  and $S$ are regular local rings and $M=R$. It follows that the extension map $\varphi^e$ need not be  a bijection.  

\begin{example} \label{E:1} {\rm Let $\Bbb Q\subset K\subseteq \Bbb R$ be field extensions, where $K\neq \Bbb Q$. Let $R=\Bbb Q[[x,y]]$ and $S=K[[x,y]]$ be the formal power series rings in two variables over $\Bbb Q$ and $K$, respectively.  For $\alpha\in K\setminus\Bbb Q,$ set $\frak Q_{\alpha}=(x+\alpha y, x^2+y^2)$. Then $R, S$ are regular local rings with the maximal ideals $\frak m=(x,y)R$ and $\frak n=(x,y)S$, respectively. The embedding map $\varphi: R\to S$ is a local flat homomorphism with $\frak n=\frak mS$, $\frak Q_{\alpha}$ is a parameter ideal of $S$, but $\frak Q_{\alpha}\cap R$  is not a parameter ideal of $R$. In particular, the extension map $\varphi^e: X_R\to X_S$ is not a surjection}.
\end{example}

\begin{proof} Let $\alpha\in K\setminus\Bbb Q.$  We first claim that $\frak Q_{\alpha}\supseteq (x^2, y^2, xy)S.$ We have 
	$$xy-\alpha x^2 = xy +\alpha y^2 - \alpha x^2-\alpha y^2 =y(x+\alpha y) - \alpha (x^2+y^2)\in \frak Q_{\alpha}.$$ Therefore, we have
	$$x^2+\alpha^2x^2=x^2+\alpha xy - \alpha xy +\alpha^2x^2=x(x+\alpha y) - \alpha (xy-\alpha x^2)\in \frak Q_{\alpha}.$$
	Hence $x^2 (1+\alpha^2)\in \frak Q_{\alpha}.$ Note that $1+\alpha^2\neq 0$, so it is a unit in $S$. Hence $x^2\in \frak Q_{\alpha}$. So, we have $y^2=(x^2+y^2)-x^2\in \frak Q_{\alpha}.$ Hence $xy=y(x+\alpha y)-\alpha y^2\in \frak Q_{\alpha}.$ Therefore, $\frak Q_{\alpha}\supseteq (x^2, y^2, xy)S,$ the claim is proved.
	
	Next, we show that $\frak Q_{\alpha}\cap R=(x^2, y^2, xy).$ Let $\frak q=(x^2, y^2, xy)$. We get by the above claim that $\frak q\subseteq \frak Q_{\alpha}\cap R.$ Let $f\in\frak Q_{\alpha}\cap R.$ Since $\frak Q_{\alpha}\cap R\subseteq (x, y)R,$ we can write $$f=x^2f_1+y^2f_2+xyf_3+f_4,$$
	 where $f_1, f_2, f_3, f_4\in R$ and $f_4=ax+by$ for some $a, b\in\Bbb Q.$ Since $\frak qS\subseteq \frak Q_{\alpha}$ and $f\in\frak Q_{\alpha}$, it follows that $f_4\in\frak Q_{\alpha}=(x+\alpha y, x^2+y^2).$ Therefore, $f_4=\beta (x+\alpha y)$ for some $\beta\in K.$ Hence $\beta=a\in\Bbb Q$ and $\beta\alpha=b\in\Bbb Q.$ If $\beta\neq 0$ then $\alpha =b/a\in\Bbb Q,$ this is impossible. Hence $\beta =0$ and hence $f_4=0$. Therefore, $f=x^2f_1+y^2f_2+xyf_3\in\frak q.$ Thus, $\frak Q_{\alpha}\cap R=\frak q.$ It is clear that $\frak q$ is not a parameter ideal of $R$.
	
	Note that $K[[x,y]]/(x^2, y^2, xy)$ forms a $K$-vector space with a basis $\{1, x, y\},$ therefore $\ell_S(S/\frak q S)=3.$ Since $\frak qS\subseteq \frak Q_{\alpha}$, we can check that  $\frak Q_{\alpha}=(x^2, y^2, xy, x+\alpha y).$ Hence $S/\frak Q_{\alpha}$ forms a $K$-vector space with a basis $\{1, y\}$ and hence   $\ell_S(S/\frak Q_{\alpha})=2.$ Therefore, $\frak Q_{\alpha}\neq \frak qS.$ Suppose to contrary that the extension map $\varphi^e: X_R\to X_S$ is a bijection. Then there exists a parameter ideal $\frak q'$ of $R$ such that $\frak Q_{\alpha}=\frak q'S.$ Hence $\frak q'=\frak Q_{\alpha}\cap R=\frak q$ and hence $\frak Q_{\alpha}=\frak qS,$ a contradiction. Therefore, $\varphi^e$ is not  bijective.
\end{proof}

Next, we show that there exist distinct parameter ideals $\frak Q, \frak Q'$ of $S$ and a parameter ideal $\frak q$ of $R$ such that $\frak Q\cap R=\frak Q'\cap R=\frak q$. So, the contraction map $\varphi^c$ need not be bijective.    

\begin{example} \label{E:2} {\rm Let $K=\Bbb Q[\sqrt{2}]$ and  $R=\Bbb Q[[x,y]]/(xy, y^2)$, $S=K[[x,y]]/(xy, y^2),$ where $\Bbb Q[[x,y]]$ and $K[[x,y]]$ are the formal power series rings in two variables $x, y$  over $\Bbb Q$ and $K$, respectively. Let $\frak m=(x,y)R$, $\frak n=(x,y)S$,  $\frak Q=(x-\sqrt{2}y)S$ and $\frak q=x^2R$. Then $\frak q$ is a parameter ideal of $R$, the embedding map $\varphi: R\to S$ is a flat local homomorphism with $\frak n=\frak mS$,  and $\frak Q, \frak qS$ are distinct parameter ideals of $S$ satisfying $\frak Q\cap R=\frak qS\cap R=\frak q.$  In particular, the contraction map $\varphi^c: X^*_S\to X_R$ is not injective}.
\end{example}

\begin{proof}  Since $x^2-x(x-\sqrt{2}y)\in (xy, y^2),$ we have
	$$\frak qS=x^2S=x(x-\sqrt{2}y)S=x\frak Q.$$ Hence $\frak qS\neq \frak Q$ by Nakayama Lemma and $\frak q\subseteq \frak Q\cap R$.  Suppose to contrary that $\frak q\neq \frak Q\cap R$. Then $\frak (Q\cap R)/\frak q$ is a non-zero ideal of $\Bbb Q[[x,y]]/(x^2, xy, y^2).$ Therefore, there exist $a, b\in \Bbb Q$ and $\beta\in K$ such that $0\neq ax+by=\beta (x-\sqrt{2}y).$ If $a\neq 0$ then $\sqrt{2}=-b/a,$ it is impossible. Therefore, $\beta =a=0$, hence $ax+by=0,$ a contradiction. Thus, $\frak Q$, $\frak qS$ are  distinct parameter ideals of $S$ and $\frak q$ is a parameter ideal of $R$ satisfing $\frak Q\cap R=\frak qS\cap R=\frak q.$
\end{proof}

 In order to prove Theorem \ref{T:2a}, we need recall some  preliminaries. It is well known that if  $N$ is a submodule  of $M$, then $N$  can be expressed as an irredundant intersection of irreducible submodules of $M$, and the number of irreducible submodules appearing in such an expression is independent of the choice of the expression (this result was proved by E. Noether  \cite{Noe}  for ideals of Noetherian rings). This invariant is called the {\it reducibility index} of $N$  in $M$ and denoted by ${\rm ir}_M(N)$.   If $\frak q$ is a parameter ideal of $M$, then ${\rm ir}_M(\frak q M)$  is called the {\it reducibility index} of $\frak{q}$ on $M$. We recall some important results about reducibility index.  

For an $R$-module $L$, we set $\Soc (L)=(0:_L\frak m).$ Note that if $L$ is either finitely generated or Artinian then $\dim_{R/\frak m}\Soc (L)<\infty.$ 
 
\begin{remark} \label{R:2} {\rm Let $\dim_R(M)=d$. Let $\frak q$ be a parameter ideal of $M$}.
\begin{itemize}
\item[{\rm (a)}] {\rm $\ir_M(\frak qM)=\dim_{R/\frak m}\Soc (M/\frak qM),$ see \cite{Nor} (see also \cite[Lemma 2.3]{CQT})}.  

\item[{\rm (b)}] {\rm $R$ is Gorenstein if and only if $\ir_R(\frak q)=1$ for every parameter ideal $\frak q$ of $R$, see \cite {NR}.

\item[{\rm (c)}] {\rm  If $M$ is Cohen-Macaulay then $\ir_M(\frak qM)$ is a constant  independent of $\frak q$, see  \cite{Nor}.} 

\item[{\rm (d)}] {\rm If $M$ is generalized Cohen-Macaulay, then  we have by \cite{GSu} that} $${\rm ir}_M(\frak q M) \leq \sum_{j=0}^{d-1}{\it{{{d} \choose {j}}}}\ell_R(H^j_{\frak m}(M))+ \dim_{R/\frak m} \Soc (H^d_{\frak m}(M)).$$ 
Moreover, by \cite{CQ}, if $\frak q$ is contained in a large enough power of $\frak m$ then}
$${\rm ir}_M(\frak q M) = \sum_{j=0}^{d}{\it{{{d} \choose {j}}}}\dim_{R/\frak m} \Soc(H^j_{\frak m}(M)).$$
\item[{\rm (e)}] {\rm If $\dim_RH^i_{\frak m}(M)\leq 1$ for all $i<d$ then there exists $C\in\Bbb N$ such that ${\rm ir}_M(\frak q M) < C$ for all parameter ideals $\frak q$ of $M$, see \cite{Q}.}  
\end{itemize}
 \end{remark} 

Now we are ready to prove Theorem \ref{T:2a}.
 
\begin{theorem} \label{T:2} Assume that $\frak n=\frak mS.$ Let $M$ be a finitely generated $R$-module.
\begin{itemize}
\item[{(a)}] If $\ell_R(S/\frak mS)=1$ then every parameter ideal $\frak Q$ of $M\otimes_RS$ is an extension of a  parameter ideal $\frak q$ of $M$. In this case, the extension map $\varphi^e: X_M\to X_{M\otimes_RS}$ is  bijective.
		
\item[{(b)}] If $R/\Ann_RM$ is Gorenstein or $\ell_R(S/\frak mS)=1$, then every parameter ideal $\frak Q$  of $M\otimes_RS$ belonging to $X^*_{M\otimes_RS}$ is an extension of a  parameter ideal $\frak q$ of $M$. In this case, the contraction map $\varphi^c: X^*_{M\otimes_RS}\to X_M $ is bijective.
	\end{itemize}
\end{theorem}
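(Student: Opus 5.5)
The plan for both parts is to work modulo annihilators and compare $S$-ideals with extensions of $R$-ideals using faithful flatness and $\frak n=\frak mS$. Put $A=\Ann_RM$. By Lemma \ref{L:1b}, $\Ann_S(M\otimes_RS)=AS$. The map $R/A\to S/AS$ is again flat and local, its maximal ideals satisfy $\frak n/AS=(\frak m/A)S$, and its closed fibre is still $S/\frak mS$. A parameter ideal of $M$ (resp. $M\otimes_RS$) is determined by a system of parameters of $R/A$ (resp. $S/AS$). So I would first reduce to the case where $M$ is faithful, and then compare parameter ideals of the rings $R$ and $S$. Making this reduction honest is the step I expect to be the main obstacle. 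The issue is that a parameter ideal of $M$ is an ideal of $R$, not of $R/A$, so equalities obtained modulo $AS$ must be lifted back to genuine equalities of ideals. I would handle this by carrying $AS$ along in every Nakayama argument below, and if necessary by adjusting the chosen generators by elements of $A$.

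For (a), I would first show that $\ell_R(S/\frak mS)=1$ forces $R/\frak m^k\to S/\frak n^k$ to be an isomorphism for every $k$. Injectivity comes from faithful flatness, since $\frak m^kS\cap R=\frak m^k$. Surjectivity comes from a length count: $S/\frak m^kS\cong R/\frak m^k\otimes_RS$ has a filtration by copies of $S/\frak mS\cong R/\frak m$, so $\ell_R(S/\frak n^k)=\ell_R(R/\frak m^k)$. Hence $S=R+\frak n^k$ for all $k$.

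Now let $\frak Q=(y_1,\dots,y_d)S$ be a parameter ideal of $M\otimes_RS$, and choose $k$ with $\frak n^k\subseteq \frak Q+AS$. I would pick $x_i\in R$ with $y_i-x_i\in\frak n^{k+1}\subseteq \frak n(\frak Q+AS)$ and set $\frak q=(x_1,\dots,x_d)R$. A Nakayama argument in $S/AS$ should then give $\frak qS+AS=\frak Q+AS$. In particular $\frak n^k\subseteq \frak qS+AS$, so by contraction $\frak m^k\subseteq \frak q+A$, and $x_1,\dots,x_d$ is a system of parameters of $M$. Finally, the relation $y=(I-C)^{-1}(x+s)$, with $C$ having entries in $\frak n$ and $s_i\in AS$, should identify $\frak Q$ with $\frak qS$, using the reduction above. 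Injectivity of $\varphi^e$ was already noted in the introduction, so bijectivity follows.

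For (b), the case $\ell_R(S/\frak mS)=1$ follows from (a): if $\frak Q=\frak q'S$, then $\frak q'=\frak Q\cap R$, so the contraction map is injective. For the Gorenstein case, let $\frak Q\in X^*_{M\otimes_RS}$ and $\frak q=\frak Q\cap R\in X_M$, so that $\frak qS\subseteq\frak Q$. I would pass to $\bar R=R/A$, which is Gorenstein, and $\bar S=S/AS$. By Remark \ref{R:2}(a),(b), $\Soc(\bar R/\frak q\bar R)$ is one-dimensional, spanned by the class of some $u\in R$ with $u\notin\frak q+A$. By flatness and $\frak n=\frak mS$, $\Soc(\bar S/\frak q\bar S)=(0:_{\bar R/\frak q\bar R}\frak m)\otimes_RS$, which is generated by the image of $u$. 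Since $\bar S/\frak q\bar S$ is Artinian, if $\frak Q\bar S\neq\frak q\bar S$ then the nonzero ideal $\frak Q\bar S/\frak q\bar S$ contains a socle element, hence contains $u$. That gives $u\in(\frak Q+AS)\cap R=\frak q+A$, a contradiction. Therefore $\frak Q+AS=\frak qS+AS$, and the same lifting step as in (a) yields $\frak Q=\frak qS$, so $\varphi^c$ is injective and hence bijective.
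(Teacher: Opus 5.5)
Your argument is the paper's argument when $M$ is faithful. In (a) the length count gives $R/\mathfrak{m}^k\cong S/\mathfrak{n}^k$, you lift the generators of $\mathfrak{Q}$ modulo $\mathfrak{n}\mathfrak{Q}$, and Nakayama finishes. In (b) the one-dimensional socle of $R/\mathfrak{q}$ extends to the socle of $S/\mathfrak{q}S$, so $\mathfrak{Q}\supsetneq\mathfrak{q}S$ would force its generator $u$ into $\mathfrak{Q}\cap R=\mathfrak{q}$.

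The genuine gap is the step you yourself flag, and it cannot be closed.
\begin{itemize}
\item In (a) you must pass from $\mathfrak{Q}+AS=\mathfrak{q}S+AS$ to $\mathfrak{Q}=\mathfrak{q}S$. But the correction terms $s_i$ lie in $AS$, not in $A$, so no adjustment of the $x_i$ inside $R$ can absorb them.
\item In (b) you use $(\mathfrak{Q}+AS)\cap R=\mathfrak{q}+A$. This does not follow from $\mathfrak{Q}\cap R=\mathfrak{q}$.
\end{itemize}
In fact both (a) and (b) are false for non-faithful $M$; an example follows. The paper's proof has exactly the same hole. It removes the annihilator with ``without loss of any generality, we may assume that $\Ann_RM=0$'', which is legitimate only if parameter ideals of $M$ are read as ideals of $R/\Ann_RM$. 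So what you and the paper actually prove is the case $\Ann_RM=0$. For general $M$ one gets only the version read in $R/A$ and $S/AS$:
\begin{itemize}
\item in (a), $\mathfrak{Q}+AS=\mathfrak{q}S+AS$ for some $\mathfrak{q}\in X_M$, which your Nakayama step does prove;
\item in (b), the same conclusion, but only under the stronger hypothesis $(\mathfrak{Q}+AS)\cap R=\mathfrak{q}+A$.
\end{itemize}
You should restrict the claim to one of these forms rather than try to lift.

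Here is the counterexample. Take $R=\mathbb{Q}[x,y]_{(x,y)}$, $S=\widehat{R}=\mathbb{Q}[[x,y]]$ and $M=R/xR$. Then $\mathfrak{n}=\mathfrak{m}S$ and $\ell_R(S/\mathfrak{m}S)=1$. Also $R/\Ann_RM\cong\mathbb{Q}[y]_{(y)}$ is regular, so every hypothesis of (a) and (b) holds. The element $p=y^2-x^2-x^3$ is irreducible in $\mathbb{Q}[x,y]$, since $1+x$ is not a square in $\mathbb{Q}[x]$, so $pR$ is a height-one prime of $R$. In $S$, however, $p=fg$ with $f=y-x\sqrt{1+x}$ and $g=y+x\sqrt{1+x}$, both in $\mathfrak{n}$ (here $\sqrt{1+x}\in\mathbb{Q}[[x]]$).

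Let $\mathfrak{Q}=fS$. Since $(x,f)S=\mathfrak{n}$, $\mathfrak{Q}$ is a parameter ideal of $M\otimes_RS=S/xS$. Since $S/\mathfrak{Q}\cong\mathbb{Q}[[x]]$, $\mathfrak{Q}$ is prime and $x\notin\mathfrak{Q}$. Hence $\mathfrak{Q}\cap R$ is a prime with $pR\subseteq\mathfrak{Q}\cap R\subsetneq\mathfrak{m}$. As $pR$ has height one and $\dim R=2$, this forces $\mathfrak{Q}\cap R=pR$. This is a parameter ideal of $M$, because $(x,p)=(x,y^2)$. Thus $\mathfrak{Q}\in X^*_{M\otimes_RS}$, yet $pS=fgS\subsetneq fS=\mathfrak{Q}$. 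Consequently:
\begin{itemize}
\item $\mathfrak{Q}$ is not the extension of any ideal of $R$, since such an ideal would have to be $\mathfrak{Q}\cap R=pR$; so $\varphi^e$ is not surjective.
\item $\varphi^c(\mathfrak{Q})=\varphi^c(pS)=pR$ with $\mathfrak{Q}\neq pS$, so $\varphi^c$ is not injective.
\item $(\mathfrak{Q}+AS)\cap R=\mathfrak{m}\neq(x,y^2)=\mathfrak{q}+A$, so the equality you use in (b) fails as well.
\end{itemize}
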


\begin{proof} (a) We first note that  the induced map $R/\Ann_RM\to S/(\Ann_RM)S$ is also a flat local homomorphism. By Lemma \ref{L:1b}, without loss of any generality, we may assume that $\Ann_RM=0$ and $\Ann_S(M\otimes_RS)=0.$ 

Now we prove the statement (a). Let $\frak Q$ be a parameter ideal $M\otimes_RS$. Then $\frak Q$ is $\frak n$-primary as $\Ann_S(M\otimes_RS)=0.$  So, there exists $k\in \Bbb N$ such that $\frak m^kS=\frak n^k\subseteq \frak Q$.  Consider the map $\varphi_{k+1}: R/\frak m^{k+1}\rightarrow S/\frak m^{k+1}S$ given by $\varphi_{k+1}(r+\frak m^{k+1})=\varphi(r)+\frak m^{k+1}S.$  It follows by the faithful flatness of $\varphi$ that $\varphi_{k+1}$ is an injective homomorphism of $R$-modules. Since $\ell_R(S/\frak n)=1$, we have
$\ell_R(S/\frak m^{k+1}S)=\ell_S(S/\frak m^{k+1}S)~\ell_R(S/\frak n)=\ell_S(S/\frak m^{k+1}S)$. Moreover,  $\ell_S(S/\frak mS)=1$ as $\frak n=\frak mS$. So, by the faithful flatness of $\varphi$ we have 
$$\ell_R(S/\frak m^{k+1}S)=\ell_S(R/\frak m^{k+1}\otimes_RS)=\ell_R(R/\frak m^{k+1})~\ell_S(S/\frak mS)=\ell_R(R/\frak m^{k+1})=\ell_R(\im (\varphi_{k+1})).$$ 
 Hence,  $\varphi_{k+1}$ is an isomorphism of $R$-modules. Let $\dim_R(M)=d.$ Then $\dim_S(M\otimes_RS)=d.$ Suppose that $\frak Q=(y_1, \ldots , y_d).$  For each $i\leq d,$ since $\varphi_{k+1}$ is an isomorphism,  there exists $x_i\in R$ such that $\varphi(x_i)+\frak m^{k+1}S=y_i+\frak m^{k+1}S.$ Hence $\varphi(x_i)-y_i\in \frak m^{k+1}S\subseteq \frak m\frak Q.$ Hence $\varphi(x_i)\in\frak Q$ for all $i\leq d$, and hence $(x_1, \ldots , x_d)S \subseteq \frak Q.$ It follows that  $$\frak Q=(y_1, \ldots , y_d)=(x_1, \ldots , x_d)S+\frak m\frak Q.$$ 
So, $\frak Q= (x_1, \ldots , x_d)S$ by Nakayama Lemma. Set $\frak q:=\frak Q\cap R.$ Then $\frak q=(x_1, \ldots , x_d)$ and $\frak Q=\frak qS$. Thus,   $\frak Q$ is the extension of the parameter ideal $\frak q$ of $M$. In particular, the extension map $\varphi^e: X_M\to X_{M\otimes_RS}$ is a bijection.

(b) Suppose that $R/\Ann_RM$ is Gorenstein. By Lemma \ref{L:1b}, without loss of generality, we may assume that $\Ann_RM=0$ and $R$ is Gorenstein. Let $\frak Q$ be a parameter ideal $M\otimes_RS$ such that $\frak Q\cap R$ is a parameter ideal of $M$. Set $\frak q=\frak Q\cap R.$ Note that $\frak q$ is a parameter ideal of $R$ since $\Ann_RM=0$. We claim that $\frak Q=\frak qS.$ In fact, it is clear that $\frak qS\subseteq \frak Q.$ Suppose to the contrary that $\frak qS\neq \frak Q.$  Since $R$ is Gorenstein and $\frak n=\frak mS$, it follows  by Remark \ref{R:2}(a),(b) that 
$$\ell_S\big((\frak qS:_S\frak mS)/\frak qS\big)=\ell_R\big((\frak q:\frak m)/\frak q\big)~\ell_S(S/\frak mS)=\dim_{R/\frak m}(\Soc (R/\frak q))=1.$$
So,  $(\frak qS:_S\frak mS)/\frak qS$ forms an $S/\frak n$-vector space of dimension $1$. Since $\ell_S(\frak Q/\frak qS)<\infty$, there exists an ideal $\frak I$ of $S$ such that $\frak qS\subset \frak I\subseteq \frak Q$ and  $\frak I/\frak qS$ forms an $S/\frak n$-vector space of dimension $1$. Let $s\in \frak I\setminus \frak qS$. Then $\frak n s\in\frak qS$ and $(sS+\frak qS)/\frak qS=\frak I/\frak qS$. Hence $s\in (\frak qS:_S\frak n)=(\frak qS:_S\frak mS)$ and hence $(sS+\frak qS)/\frak qS=(\frak qS:_S\frak mS)/\frak qS.$ So, we have $$(\frak q:_R\frak m)S=(\frak qS:_S\frak mS)=\frak I\subseteq \frak Q.$$ Therefore, 
$(\frak q:_R\frak m)\subseteq \frak Q\cap R=\frak q$, this gives a contradiction. So, the claim is proved. Let $\frak Q, \frak Q'\in X^*_{M\otimes_RS}$ such that $\frak Q\cap R=\frak Q'\cap R=\frak q.$  Then $\frak Q=\frak qS=\frak Q'$ by the above claim. Thus, the contration map $\varphi^c$ is a bjection.

Next, suppose that $\ell_R(S/\frak mS)=1$. Note that the contration map $\varphi^c$ is always surjective. Let $\frak Q, \frak Q'\in X_{M\otimes_RS}^*$ such that $\frak Q\cap R=\frak Q'\cap R.$ By assertion (a), there exist $\frak q, \frak q'\in X_M$ such that $\frak Q=\frak qS$ and $\frak Q'=\frak q'S$. Hence $\frak q=\frak Q\cap R=\frak Q'\cap R=\frak q'.$  Therefore, $\frak Q=\frak Q'$. Thus, $\varphi^c$ is injective.
\end{proof}

 \begin{remark} \label{R:3} {\rm  The result of Theorem \ref{T:2}(a) is optimal in the sense that if the length $\ell_R(S/\frak mS)$ is greater than 1, the extension map $\varphi^e$ may no longer be a bijection.  
 		
 For example, consider the embedding local flat homomorphism  $\varphi: R \to S$ constructed in Example \ref{E:1}, where $\Bbb Q\subset K\subseteq \Bbb R$ are field extensions with $K\neq \Bbb Q$, $R=\Bbb Q[[x,y]]$ and $S=K[[x,y]]$.    Let $\frak m=(x,y)R$ and $\frak n=(x,y)S$ denote  the maximal ideals of $R$ and $S$ respectively. Then $\frak n=\frak mS$. As shown in Example \ref{E:1}, the extension map $\varphi^e: X_R\to X_S$ is not bijective. 	
 If we choose $K$ to be a field extension of infinite degree over $\Bbb Q$ (for example, $K=\Bbb R$), then  $\ell_R(S/\frak mS)=\infty$.   For an integer $n>1,$ if we choose  $K=\Bbb Q[\alpha]$  to be a field extension of degree $n$ over $\Bbb Q$ (for example $\alpha=\sqrt[n]{2}$, the real $n$-th root of $2$), then  $\ell_R(S/\frak mS)=n$.}
\end{remark} 

The result of Theorem \ref{T:2}(b) is not sharp in some sense. Even $R$ is not Gorenstein and $\ell_R(S/\mathfrak{m}S)$ is arbitrarily large, the contraction map $\varphi^c: X^*_{M\otimes_RS}\to X_M$ can still be a bijection for every finitely generated $R$-module $M$.

 \begin{example} \label{E:3}  {\rm Let $t$ be an indeterminate, $R = \mathbb{Q}[[t^3, t^4, t^5]]$ and $S = K[[t^3, t^4, t^5]]$, where $\Bbb Q\subset K\subseteq \Bbb R$ are field extensions. Let $\frak m= (t^3, t^4, t^5)R$ and $\frak n=(t^3, t^4, t^5)S$ denote  the maximal ideals of $R$ and $S$, respectively. Then the embedding map $\varphi: R\to S$ is a flat local homomorphism, $\frak n=\frak mS$, the contraction map $\varphi^c: X^*_{M\otimes_RS}\to X_M$ is bijective for every finitely generated $R$-module $M$, but $R$ is not Gorenstein and $\ell_R(S/\frak mS)$ can be arbitrarily large.} 
 \end{example}
  
\begin{proof}  Let $\frak q=t^3R.$ Then $\frak q$ is a parameter ideal of $R$. We can check that $(0:_{R/\frak q}\frak m)$ is a vector space over $R/\frak m$ of dimension $2$ with a basis $\{T^4, T^5\},$ where $T^4, T^5$ are respectively the images of $t^4, t^5$ in $R/\frak q$. Hence $\ir_R(\frak q)=\dim_{R/\frak m} \Soc(R/\frak q)=2$, and hence $R$ is not Gorenstein by Remark \ref{R:2}(b).
 
 Let $M$ be a finitely generated $R$-module of dimension $d$. As $\dim (R)=1$, we have $d\leq 1.$ Note that the contraction map $\varphi^c: X^*_{M\otimes_RS}\to X_M$ is always surjective. If $d=0$ then there is nothing to do. So we can assume that $d=1$.  Let $\frak Q, \frak Q'$ be parameter ideals of $M\otimes_RS$ and let $\frak q$ be a parameter ideal of $M$ such that $\frak Q\cap R=\frak Q'\cap R=\frak q$. We will show that $\frak Q=\frak Q'=\frak qS.$ 
  Suppose to contrary that $\frak Q\neq \frak qS$. Since $d=1$ and $\dim_S(M\otimes_RS)=1$, we have $\frak Q=fS$ and $\frak q=gR$ for some non-zero elements  $f\in \frak n$ and $g \in \frak m.$ Hence   $fS \neq gS$. Note that $\frak qS\subseteq \frak Q$.  So, we can write $g = fh$ for some $h \in S$. Note that  $h$ is not a unit in $S$ since  $fS\neq gS$. It follows that $h \in \mathfrak n$. Note that $t^k\in R$ and $t^k\in S$ for all integers $k\geq 3.$ Therefore, $\frak m=t^3 \mathbb{Q}[[t]]$, $\frak n=t^3 K[[t]]$, $t^4 \mathbb{Q}[[t]] \subset R$ and $t^4 K[[t]] \subset S.$ Since $h\in\frak n$, we have $th\in t^4 K[[t]]$. So, $th \in S$. Similarly, since $g \in \mathfrak{m},$ we have  $tg \in t^4 \mathbb{Q}[[t]]$, and hence $tg\in R$.  Since $g=fh$ and $th\in S$, we have $tg=f(th)\in fS.$ Since $tg\in R$, we have $tg\in fS\cap R=\frak q=gR.$ Therefore, $tg=gr$ for some $r\in R.$ 
 Since $K[[t]]$ is a domain and $g \neq 0$, we can apply the cancellation law in $K[[t]]$ for $g$, yielding $t=r\in R=\mathbb{Q}[[t^3, t^4, t^5]]$. This gives a contradiction. Hence $\frak Q=\frak qS$. By the same arguments we have $\frak Q'= \frak qS$.  Hence the contraction map $\varphi^c$ is injective.
 
 Finaly, as shown in Remark \ref{R:2}, if we choose $K=\Bbb R$ then  $\ell_R(S/\frak mS)=\infty$; if we choose $K=\Bbb Q[\sqrt[n]{2}]$ then $\ell_R(S/\frak mS)=n$ for any given integer  $n>1.$ 
\end{proof}

In Example \ref{E:2}, the local ring $R=\Bbb Q[[x,y]]/(xy, y^2)$ is not Cohen-Macaulay, and the contraction map $\varphi^c$ is not a bijection. Meanwhile, in Example \ref{E:3}, although the local ring $R=\Bbb Q[[t^3, t^4, t^5]]$ is not Gorenstein, it is Cohen-Macaulay and the contraction map $\varphi^c$ is a bijection. Based on Theorem \ref{T:2}(b) and these observations, we conclude this paper with the following open question. 

\begin{question} {\rm Let $\varphi: (R,\frak m)\to (S,\frak n)$ be a local flat homomorphism such that $\frak n=\frak mS$. Let $M$ be a finitely generated $R$-module.  Suppose that $R/\Ann_RM$ is Cohen-Macaulay. Is the contraction map $\varphi^c: X^*_{M\otimes_RS}\to X_M$  bijective? }	
\end{question}

\noindent {\bf Acknowledgment}

The authors thank  Nguyen Dang Hop for the suggestions concerning  Example \ref{E:2}.

\end{document}